\newtheorem{theorem}{Theorem}[section]
\newtheorem{prop}[theorem]{Proposition}
\newtheorem{lemma}[theorem]{Lemma}
\newtheorem{cor}[theorem]{Corollary}
\theoremstyle{remark}
\newtheorem{dfn}[theorem]{Definition}
\newtheorem{remark}[theorem]{Remark}
\def\co{\colon\thinspace}
\def\ep{\epsilon}
\def\R{\mathbb{R}}
\def\C{\mathbb{C}}
\DeclareMathOperator{\osc}{osc}
\begin{document}

\title{Observations on the Hofer distance between closed subsets}
\author{Michael Usher}
\address{Department of Mathematics\\University of Georgia\\Athens, GA 30602\\usher@math.uga.edu}

\begin{abstract}
We prove the elementary but  surprising fact that the Hofer distance between two closed subsets of a symplectic manifold can be expressed in terms of the restrictions of Hamiltonians to one of the subsets; this helps explain certain energy-capacity inequalities that appeared recently in \cite{BM} and \cite{HLS}.  We also build on \cite{U} to obtain new vanishing results for the Hofer distance between subsets, applicable for instance to singular analytic subvarieties of K\"ahler manifolds.
\end{abstract}

\maketitle

This note uses rather elementary arguments to deduce some results about the Hofer distance between closed subsets, defined as the infimal Hofer norm of a Hamiltonian diffeomorphism that maps one subset to the other.  In the first section we give an alternative formula (Theorem \ref{main}) for this distance, which helps explain some seemingly-unexpectedly-strong versions of energy-capacity inequalities that appeared recently in \cite{BM} and \cite{HLS}, and indeed shows that all energy-capacity inequalities can be expressed in a similar strengthened form.  The second section contains new results about the rigid locus defined in \cite{U}, in particular connecting it to the Poisson bracket in Corollary \ref{raco}, and uses these to expand the class of subsets on whose orbits the Hofer distance is known to vanish identically.  Specifically this vanishing is established for all non-Lagrangian, half-dimensional submanifolds (Corollary \ref{halfdim}) and all analytic subvarieties (including singular ones) in K\"ahler manifolds (Theorem \ref{subvar}).

\section{Restricting the Hamiltonian}

There exists a rich history of results in symplectic topology asserting that, in order for a Hamiltonian diffeomorphism $\phi$ of a symplectic manifold $(M,\omega)$ to behave in a certain way with respect to a subset $A$ of $M$, the Hofer norm $\|\phi\|_{H}$ of $\phi$ must exceed some positive lower bound.  Here $\|\phi\|_{H}$ is by definition the infimal value of $\int_{0}^{1} \left(\max_M H(t,\cdot)-\min_M H(t,\cdot)\right)dt$ among smooth compactly supported functions $H\co [0,1]\times M\to\R$ having time-one map $\phi_{H}^{1}$ equal to $\phi$.  Indeed, the original proofs that $\|\cdot\|_{H}$ is nondegenerate \cite{Ho},\cite{LM} proceed by proving that, for $A$ equal to a closed Darboux ball in $M$, there is a number $c_A >0$ such that any Hamiltonian diffeomorphism $\phi$ such that $\phi(A)\cap A=\varnothing$ must have $\|\phi\|_{H}\geq c_A$; if $(M,\omega)$ is geometrically bounded \cite{Ch98} establishes a similar bound with $A$ instead equal to a compact Lagrangian submanifold of $M$, generalizing an earlier result of \cite{P}.  Along similar lines,   if $A$ is a compact Lagrangian submanifold of a tame symplectic manifold $(M,\omega)$ and if $U$ is either an open set intersecting $A$ or a compact Lagrangian submanifold that intersects $A$ transversely (and nontrivially) then there is a number $c_{A,U} > 0$ such that one has the bound $\|\phi\|_{H}\geq c_{A,U}$ whenever $\phi(A)\cap \bar{U}=\varnothing$. (This is \cite[Theorem 4.9]{U}; see also \cite[Corollary 3.7]{BC06}, \cite[Theorem J]{FOOO}, and \cite{Cha} for results which cover less general situations but have stronger bounds $c_{A,U}$.)

Recently, similar results to some of those above have appeared in \cite[Theorem 1.5(ii)]{BM} and in \cite[Lemma 9, citing \cite{LR}]{HLS}, but with a surprising twist.\footnote{We might also mention the results \cite[Lemma 2.1, Theorem 2.17(vi)]{MVZ} about Lagrangian spectral invariants, which in retrospect could  be seen as anticipating this phenomenon.}  For certain rather specific classes of symplectic manifolds $(M,\omega)$ and Lagrangian submanifolds $A$, for any open set $U$ intersecting $A$ these authors produce a positive constant $c_{A,U}$ which serves as a lower bound not only for the Hofer norm but also for the apparently-smaller quantity \begin{equation}\label{relint} \int_{0}^{1}\left(\max_A H(t,\cdot)-\min_A H(t,\cdot)\right)dt \end{equation} whenever the time-one map of $H\co [0,1]\times M\to \R$ disjoins $A$ from $\bar{U}$.  This appears counterintuitive: at time, say, $0.5$ one would expect the values of $H(0.5,\cdot)$ along $\phi_{H}^{0.5}(A)$ to be more relevant to the question of whether the Hamiltonian isotopy $\{\phi_{H}^{t}\}$ generated by $H$ moves $A$ out of $\bar{U}$ than the values of $H(0.5,\cdot)$ along $A$, yet it is the latter that contributes to (\ref{relint}).  The fact that the maximum and minimum in (\ref{relint}) can be taken over $A$ rather than $M$ is consequential: it plays a key role in the proof of the main result of \cite{HLS} on the $C^0$-rigidity of coisotropic submanifolds.

In this section we give a simple explanation for these results which give estimates for (\ref{relint}) instead of only for the Hofer norm: they do not, as might first appear, represent some new mysterious action-at-a-distance phenomenon in symplectic topology; rather, by means of elementary considerations about the relationships between Hamiltonians and their time-one maps we will see that the sorts of Hofer norm bounds described above immediately imply identical bounds on the quantity (\ref{relint}).\footnote{This is not to say that the aforementioned results in \cite{BM} and \cite{HLS} can be deduced from our argument together with prior results: even just as bounds on the Hofer norm, their lower bounds $c_{A,U}$ are in some cases larger than those given by other methods.}  In particular all of the bounds described in the first paragraph of this section can be combined with   Theorem \ref{main} below to yield bounds on (\ref{relint}) in the style of \cite{BM},\cite{HLS}.  

We now establish some basic notational conventions and definitions.  Throughout the paper, for a smooth manifold $P$ (possibly with boundary) we denote by $C^{\infty}_{0}(P)$ the set of smooth, compactly supported real-valued functions on $P$.

Let $(M,\omega)$ be a symplectic manifold without boundary. If $H\in C^{\infty}_{0}([0,1]\times M)$, for each $t\in [0,1]$  we let $H_t=H(t,\cdot)$ and let $X_{H_t}$ be the vector field obeying $\omega(\cdot,X_{H_t}) = dH_t$.  The Hamiltonian isotopy $\{\phi_{H}^{t}\}_{t\in [0,1]}$ is then characterized by the properties that $\phi_{H}^{0}=1_M$ and $\frac{d\phi_{H}^{t}}{dt} = X_{H_t}\circ\phi_{H}^{t}$.  As usual we denote by $Ham(M,\omega)$ the group consisting of those diffeomorphisms $\phi$ such that there exists $H\in C^{\infty}_{0}([0,1]\times M)$ with $\phi_{H}^{1}=\phi$.

For any closed subset $B\subset M$ and any $F\in C^{\infty}_{0}(M)$ write \[ \osc_B F = \max_B F - \min_B F\]  The Hofer norm on $Ham(M,\omega)$ is then defined by  \[ \|\phi\|_H = \inf\left\{\left.\int_{0}^{1}\osc_M H_t dt \right| \phi_{H}^{1}=\phi\right\} \]  

For the rest of this section fix a closed subset $A\subset M$, and let \[ \mathcal{L}(A) = \{\phi(A)|\phi\in Ham(M,\omega)\} \] denote the orbit of $A$ under the Hamiltonian diffeomorphism group.  We may then define $\delta\co \mathcal{L}(A)\times\mathcal{L}(A)\to\R$ by setting, for $A_0,A_1\in \mathcal{L}(A)$, $\delta(A_0,A_1) = \inf\{\|\phi\|_H |\phi(A_0)=A_1\}$, or equivalently (and more suggestively for our coming results) \[ \delta(A_0,A_1) = \inf\left\{\left. \int_{0}^{1}\osc_M H_t dt\right| \phi_{H}^{1}(A_0) = A_1\right\}. \]  It is easy to see that $\delta$ is a pseudometric on $\mathcal{L}(A)$ which is invariant under the action of $Ham(M,\omega)$.  In the case that $A$ is a Lagrangian submanifold the study of this pseudometric dates back at least to \cite{Oh} and \cite{Ch00}, and in the latter paper it is shown that if $(M,\omega)$ is geometrically bounded and $A$ is a compact Lagrangian submanifold then $\delta$ is nondegenerate.  See \cite{U} (and also the following section) for results about the behavior of $\delta$ when $A$ may not be Lagrangian.

We first prove the following simple lemma:

\begin{lemma}\label{cutoff}
Given $H\in C^{\infty}_{0}([0,1]\times M)$ and a closed subset $A\subset M$ there is $K\in C^{\infty}_{0}([0,1]\times M)$ such that:
\begin{itemize}
\item[(i)] $\phi_{H}^{t}(A)=\phi_{K}^{t}(A)$ for all $t$.
\item[(ii)] $\osc_{\phi_{H}^{t}(A)}H_t = \osc_{\phi_{H}^{t}(A)}K_t$ for all $t$.
\item[(iii)] For all $t\in [0,1]$ there is $a\in A$ such that $K(t,\phi_{K}^{t}(a))=0$.
\end{itemize}
\end{lemma}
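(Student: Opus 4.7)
The plan is to modify $H$ by subtracting a product $c(t)\chi(x)$, where $c:[0,1]\to\R$ is a suitable smooth function and $\chi \in C^{\infty}_{0}(M)$ is a cutoff. The key observation is that if $\chi \equiv 1$ on an open set $U \subset M$, then on $U$ the modified Hamiltonian $K := H - c(t)\chi$ differs from $H$ only by a time-dependent constant, so $X_{K_t} = X_{H_t}$ on $U$ (making the two flows agree on trajectories that remain in $U$) and $\osc_B K_t = \osc_B H_t$ for any $B \subset U$. My goal will be to arrange $U$ to contain the orbit $\mathcal{O} := \bigcup_{t \in [0,1]} \phi_{H}^{t}(A)$ and to set $c(t)$ equal to the value of $H$ along the trajectory of a distinguished point $a_0 \in A$, so that (iii) holds at $a_0$ for every $t$.

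First I would split into two cases based on whether $A$ is contained in the compact set $\Sigma := \bigcup_{s \in [0,1]} \mathrm{supp}(H_s)$. If $A \not\subset \Sigma$, pick any $a_0 \in A \setminus \Sigma$; then $H(\cdot, a_0) \equiv 0$ and $\phi_{H}^{t}(a_0) = a_0$, so simply taking $K := H$ satisfies all three conditions with $a = a_0$ for every $t$.

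In the remaining case $A \subset \Sigma$, the set $A$ is compact (being closed in a compact set), so $\mathcal{O}$, as a continuous image of $[0,1] \times A$, is compact as well. Then I would choose $\chi \in C^{\infty}_{0}(M)$ with $\chi \equiv 1$ on some open neighborhood $U$ of $\mathcal{O}$, pick any $a_0 \in A$, set $c(t) := H(t, \phi_{H}^{t}(a_0))$ (smooth in $t$ as a composition of smooth maps), and define $K(t, x) := H(t, x) - c(t)\chi(x)$. Condition (i) follows because $X_{\chi}$ vanishes identically on $U$ and the $H$-trajectories from points of $A$ stay in $\mathcal{O} \subset U$, so by ODE uniqueness $\phi_{K}^{t}|_{A} = \phi_{H}^{t}|_{A}$. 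Condition (ii) follows because $K_t = H_t - c(t)$ pointwise on $\phi_{H}^{t}(A) \subset U$. Condition (iii) is the direct computation $K(t, \phi_{K}^{t}(a_0)) = H(t, \phi_{H}^{t}(a_0)) - c(t) \cdot 1 = 0$.

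The main obstacle is really the compact-support requirement on $K$: without it, the naive choice $K(t, x) = H(t, x) - c(t)$ would resolve everything instantly. The cutoff $\chi$ restores compact support but creates a spurious term $-c(t)X_{\chi}$ in the Hamiltonian vector field, which must vanish along every trajectory emanating from $A$; arranging this forces $\chi$ to be locally constant on a neighborhood of $\mathcal{O}$, which in turn requires $\mathcal{O}$ to be compact—hence the separation into the two cases above.
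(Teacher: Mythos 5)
Your proof is correct and follows essentially the same strategy as the paper's: in the trivial case take $K=H$ directly, and otherwise subtract (a cutoff of) the time-dependent value of $H$ along the flowline of a fixed $a_0\in A$. The only differences are cosmetic — you split cases on whether $A\subset\Sigma$ rather than on whether $A$ and then $M$ are compact, and you write the cutoff as $H-c(t)\chi$ rather than $\beta\cdot(H-f(t))$ — but the underlying idea is identical.
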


\begin{proof} The proof splits into three cases depending on whether $A$ and $M$ are compact.  

If $A$ is noncompact then the fact that the support of $H$ is compact implies that $H(t,\cdot)|_{\phi_{H}^{t}(A)}$ takes the value $0$ for all $t$, so we can simply take $K=H$.

Assuming from now on that $A$ is compact, choose an arbitrary $a_0\in A$ and define $f\co [0,1]\to \R$ by $f(t)=H(t,\phi_{H}^{t}(a_0))$.  If $M$ is compact then the lemma will hold with $K(t,m)=H(t,m)-f(t)$.  If $M$ is not compact then this latter function might not be compactly supported, but since $A$ is compact we can find $\beta\in C^{\infty}_{0}(M)$ such that $\beta=1$ on a neighborhood of $\cup_{t}\{t\}\times \phi_{H}^{t}(A)$.  Then the lemma will hold with $K(t,m)=\beta(m)(H(t,m)-f(t))$, since the Hamiltonian vector fields of $H$ and $K$ coincide along $\cup_{t}\{t\}\times \phi_{H}^{t}(A)$.
\end{proof}

The following is well-known:

\begin{prop}\label{movingosc} For $A_0,A_1\in\mathcal{L}(A)$ we have \[ \delta(A_0,A_1) = \inf\left\{\left.\int_{0}^{1}\osc_{\phi_{H}^{t}(A_0)} H_t dt\right|\phi_{H}^{1}(A_0)=A_1\right\}.\]
\end{prop}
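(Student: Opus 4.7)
The inequality $\inf\left\{\int_0^1 \osc_{\phi_H^t(A_0)} H_t \, dt : \phi_H^1(A_0) = A_1\right\} \leq \delta(A_0, A_1)$ is immediate from the pointwise bound $\osc_{\phi_H^t(A_0)} H_t \leq \osc_M H_t$. The content of the proposition is the reverse inequality, which I would establish by showing: given $\epsilon > 0$ and any $H \in C^\infty_0([0,1] \times M)$ with $\phi_H^1(A_0) = A_1$, one can construct $K' \in C^\infty_0([0,1] \times M)$ with $\phi_{K'}^1(A_0) = A_1$ and $\int_0^1 \osc_M K'_t \, dt \leq \int_0^1 \osc_{\phi_H^t(A_0)} H_t \, dt + \epsilon$. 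Taking the infimum over $H$ and letting $\epsilon \to 0$ then yields $\delta(A_0, A_1) \leq \inf\{\cdots\}$.

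First I would apply Lemma \ref{cutoff} to replace $H$ with a Hamiltonian $K$ satisfying $\phi_K^t(A_0) = \phi_H^t(A_0)$, with the same oscillation integral along the orbit, and normalized so that $K_t$ vanishes at some point of $\phi_K^t(A_0)$ for every $t$. Writing $m_t = \min_{\phi_K^t(A_0)} K_t$ and $M_t = \max_{\phi_K^t(A_0)} K_t$, this gives $m_t \leq 0 \leq M_t$, and both functions are continuous in $t$ (since the sets $\phi_K^t(A_0)$ vary continuously in Hausdorff distance). I would then pick smooth $\tilde m_t, \tilde M_t \in C^\infty([0,1])$ with $\tilde m_t < m_t \leq 0 \leq M_t < \tilde M_t$ and $\int_0^1 (\tilde M_t - \tilde m_t)\, dt \leq \int_0^1 (M_t - m_t)\, dt + \epsilon/2$.

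Next I would build, jointly smoothly in $t$, a family of nondecreasing truncation functions $\tau_t \co \R \to \R$ satisfying $\tau_t(x) = x$ on $[\tilde m_t, \tilde M_t]$ and $\tau_t(\R) \subseteq [\tilde m_t - \epsilon/4, \tilde M_t + \epsilon/4]$ (for instance by integrating a smooth bump function equal to $1$ on $[\tilde m_t, \tilde M_t]$, supported slightly beyond, and of total integral at most $\tilde M_t - \tilde m_t + \epsilon/2$). Setting $K'(t, m) = \tau_t(K(t, m))$, three things require verification: (a) since $0 \in [\tilde m_t, \tilde M_t]$ one has $\tau_t(0) = 0$, so $K'$ is supported inside the support of $K$ and is therefore compactly supported; (b) as $K_t(\phi_K^t(A_0)) \subseteq [m_t, M_t] \subset (\tilde m_t, \tilde M_t)$, continuity of $K_t$ provides a neighborhood of $\phi_K^t(A_0)$ on which $K'_t = K_t$, so their Hamiltonian vector fields agree on $\phi_K^t(A_0)$, forcing $\phi_{K'}^t|_{A_0} = \phi_K^t|_{A_0}$ and in particular $\phi_{K'}^1(A_0) = A_1$; (c) $\osc_M K'_t \leq \tilde M_t - \tilde m_t + \epsilon/2$ pointwise, which integrates to the required bound.

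The main technical hurdle is the jointly-smooth construction of $\tau_t$ with controlled image: one cannot have $\tau_t$ simultaneously equal to the identity on $[\tilde m_t, \tilde M_t]$ and globally bounded to exactly that interval, since this would force a derivative jump at the endpoints, so a small overshoot is unavoidable. Beyond that, the proof reduces to routine bump-function constructions, Hausdorff-continuity of the orbits, and the uniqueness of flows for time-dependent vector fields.
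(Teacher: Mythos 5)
Your proof is correct and follows the same skeleton as the paper's: both normalize $K$ via Lemma \ref{cutoff} so that $K_t$ vanishes somewhere on $\phi_K^t(A_0)$, then modify $K$ away from the graph $\Lambda$ of the orbit of $A_0$ to obtain $K'$ that agrees with $K$ near $\Lambda$ (hence still maps $A_0$ to $A_1$) but has global oscillation within $\ep$ of the orbital oscillation. The only real difference is the cutoff mechanism. The paper sets $K' = \chi K$ for a single Urysohn-type bump $\chi \co [0,1]\times M \to [0,1]$ that is $1$ near $\Lambda \cap \mathrm{supp}(K)$ and $0$ wherever $K_t$ leaves the $\ep/2$-enlargement of $[\min_{\phi_K^t(A_0)} K_t,\ \max_{\phi_K^t(A_0)} K_t]$; because the normalization puts $0$ between $\min$ and $\max$, multiplying by $\chi\in[0,1]$ automatically keeps the values inside the enlarged band. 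You instead post-compose $K_t$ with a $t$-smooth family of monotone truncations $\tau_t$ on the codomain. Both are sound and both use the $\min \le 0 \le \max$ normalization as the essential ingredient; the paper's spatial-cutoff version avoids the two technicalities you flag (smoothing the merely continuous functions $m_t, M_t$ and constructing a jointly smooth family $\tau_t$ with controlled image) by packaging all the smoothness into a single bump on $[0,1]\times M$, at the cost of a slightly less transparent bound on $\osc_M K'_t$.
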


\begin{proof}
Choose any $H\in C^{\infty}_{0}([0,1]\times M)$ such that $\phi_{H}^{1}(A_0)=A_1$, and let $\ep > 0$.  Let $K\in C_{0}^{\infty}([0,1]\times M)$ be as in the previous lemma, applied with $A=A_0$.  Define $\Phi\co [0,1]\times M\to [0,1]\times M$ by $\Phi(t,m) = (t,\phi_{K}^{t}(m))$ and let $\Lambda = \Phi([0,1]\times A_0)$.  Choose a smooth function $\chi\co [0,1]\times M\to [0,1]$ such that $\chi$ is identically equal to $1$ on a neighborhood of the compact set $\Lambda\cap supp(K)$ and such that $\chi(t,m) = 0$ at all $(t,m)\in [0,1]\times M$ such that $K(t,m)\notin (\min_{\phi_{K}^{t}(A_0)}K_t-\ep/2,\max_{\phi_{K}^{t}(A_0)}K_t+\ep/2)$.  

Now let $K'=\chi K$.   Since for each $t$ we have $\min_{\phi_{K}^{t}(A_0)}K_t\leq 0\leq \max_{\phi_{K}^{t}(A_0)}K_t$ we see that, for all $t$, $\osc_{M}K'_{t} < \osc_{\phi_{K}^{t}(A_0)} K_t + \ep = \osc_{\phi_{H}^{t}(A_0)}H_t +\ep$. The fact that $K'$ coincides with $K$ on a neighborhood of $\Lambda$ readily implies that $\phi_{K'}^{t}(A_0)=\phi_{K}^{t}(A_0)$ for all $t$, and in particular that $\phi_{K'}^{1}(A_0)=A_1$.  Thus \[ \delta(A_0,A_1) \leq \inf\left\{\left.\int_{0}^{1}\osc_{\phi_{H}^{t}(A_0)} H_t dt\right|\phi_{H}^{1}(A_0)=A_1\right\} + \ep.\]  Since $\ep$ is arbitrary, we obtain the inequality ``$\leq$'' in the statement of the proposition, while of course the inequality ``$\geq$'' is trivial.  
\end{proof}



The main result of this section shows that, instead of taking the oscillation over the time-dependent (and $H$-dependent) closed set $\phi_{H}^{t}(A_0)$ as in Proposition \ref{movingosc}, we can simply take it over $A_0$:

\begin{theorem}\label{main}
For $A_0,A_1\in \mathcal{L}(A)$, we have \[ \delta(A_0,A_1) = \inf\left\{\left.\int_{0}^{1}\osc_{A_0} H_t dt\right|\phi_{H}^{1}(A_0)=A_1\right\} \]
\end{theorem}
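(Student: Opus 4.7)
The direction $\inf_H \int_0^1 \osc_{A_0} H_t\,dt \le \delta(A_0,A_1)$ is immediate: both infima range over the same admissible class of Hamiltonians and $\osc_{A_0} H_t \le \osc_M H_t$ pointwise in $t$.

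For the reverse direction, I plan to approximate a given $H$ by piecewise-autonomous Hamiltonians, exploiting the fact that the desired bound is nearly immediate from Proposition \ref{movingosc} in the autonomous case. Specifically, when $F \in C^\infty_0(M)$ is time-independent, $F$ is constant along its own Hamiltonian flow, so $\osc_{\phi_F^s(A_0)} F = \osc_{A_0} F$ for all $s$; applying Proposition \ref{movingosc} to the Hamiltonian $s \mapsto \tau F$ on $[0,1]$ (whose time-$1$ map is $\phi_F^\tau$) then yields
\[
\delta\bigl(A_0,\phi_F^\tau(A_0)\bigr) \;\le\; \tau\cdot \osc_{A_0} F \qquad (\tau > 0).
\]

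Given arbitrary $H \in C^\infty_0([0,1]\times M)$ with $\phi_H^1(A_0) = A_1$ and $N \ge 1$, I would partition $[0,1]$ into $N$ equal subintervals $[t_{k-1},t_k]$ of length $\Delta t = 1/N$ and set $\bar H_k := \frac{1}{\Delta t}\int_{t_{k-1}}^{t_k} H_s\,ds \in C^\infty_0(M)$. One then constructs $H^N \in C^\infty_0([0,1]\times M)$ equal to the autonomous $\bar H_k$ on most of each subinterval, with a small smoothing near the partition points. Writing $A^N_s := \phi_{H^N}^s(A_0)$, the $\Ham$-invariance of $\delta$ (applied to conjugation by $\phi_{H^N}^{t_{k-1}}$) combined with the autonomous bound above gives
\[
\delta\bigl(A^N_{t_{k-1}},A^N_{t_k}\bigr) \;=\; \delta\bigl(A_0,\phi_{\bar H_k}^{\Delta t}(A_0)\bigr) \;\le\; \Delta t \cdot \osc_{A_0} \bar H_k,
\]
and summing via the triangle inequality together with the subadditivity estimate
\[
\Delta t \cdot \osc_{A_0}\bar H_k \;=\; \osc_{A_0}\!\int_{t_{k-1}}^{t_k} H_s\,ds \;\le\; \int_{t_{k-1}}^{t_k}\osc_{A_0} H_s\,ds
\]
would yield $\delta\bigl(A_0, \phi_{H^N}^1(A_0)\bigr) \le \int_0^1 \osc_{A_0} H_s\,ds$.

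To finish I let $N\to\infty$: the standard estimate $\|\phi_{H^N}^1 \cdot (\phi_H^1)^{-1}\|_H \le \int_0^1 \osc_M(H^N_t - H_t)\,dt$, combined with the fact that this integral tends to $0$, forces $\delta(\phi_{H^N}^1(A_0), A_1) \to 0$. The triangle inequality then produces $\delta(A_0, A_1) \le \int_0^1 \osc_{A_0} H_s\,ds$, as required. The only technical care needed is in implementing the smoothing of $H^N$ near the partition points so that $H^N$ remains in $C^\infty_0([0,1]\times M)$ and the deviation from exact piecewise-autonomy contributes only $o(1)$ errors; this is routine but must be carried out cleanly so as not to contaminate the $\osc_{A_0}$ bound.
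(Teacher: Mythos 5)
There is a gap in the key estimate. Writing $\psi = \phi_{H^N}^{t_{k-1}}$, the flow identity gives $A^N_{t_k} = \phi_{\bar H_k}^{\Delta t}\bigl(\psi(A_0)\bigr)$ rather than $\psi\bigl(\phi_{\bar H_k}^{\Delta t}(A_0)\bigr)$; these coincide only if $\phi_{\bar H_k}^{\Delta t}$ and $\psi$ commute, which is false in general. So the asserted equality $\delta\bigl(A^N_{t_{k-1}},A^N_{t_k}\bigr)=\delta\bigl(A_0,\phi_{\bar H_k}^{\Delta t}(A_0)\bigr)$ fails. What $\Ham$-invariance actually yields is
\[
\delta\bigl(A^N_{t_{k-1}},A^N_{t_k}\bigr)=\delta\bigl(A_0,\psi^{-1}\phi_{\bar H_k}^{\Delta t}\psi(A_0)\bigr)=\delta\bigl(A_0,\phi_{\bar H_k\circ\psi}^{\Delta t}(A_0)\bigr)\le\Delta t\cdot\osc_{A_0}(\bar H_k\circ\psi)=\Delta t\cdot\osc_{A^N_{t_{k-1}}}\bar H_k,
\]
with the oscillation taken over the \emph{moving} set $A^N_{t_{k-1}}$, not over $A_0$. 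Summing, your argument only recovers (up to the $N\to\infty$ approximation) a bound by $\int_0^1\osc_{\phi_H^t(A_0)}H_t\,dt$, which is precisely the content of Proposition~\ref{movingosc} and not the stronger assertion of Theorem~\ref{main}. Conservation of energy for the autonomous pieces helps only within a single subinterval; once you pass to the next subinterval the flow has already carried $A_0$ elsewhere, and the bookkeeping never returns to $A_0$.

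The paper closes this gap by replacing $H$ with a different generating Hamiltonian $K=\overline{\widehat{H}}$ for the \emph{same} time-one map, given explicitly by $K(t,m)=H\bigl(1-t,\phi_H^{1-t}((\phi_H^1)^{-1}(m))\bigr)$. This $K$ satisfies the exact identity $K(t,\phi_K^t(m))=H(1-t,m)$, so $\osc_{\phi_K^t(A_0)}K_t=\osc_{A_0}H_{1-t}$ for every $t$, and feeding $K$ into Proposition~\ref{movingosc} gives the theorem directly with no discretization or limiting argument. To make your piecewise-autonomous strategy work you would need to discretize this $K$ rather than $H$ itself, at which point you have essentially reproduced the paper's construction.
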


\begin{proof}
The plan of the proof is to show that, for any $H\in C^{\infty}_{0}([0,1]\times M)$, there exists $K\in C^{\infty}_{0}([0,1]\times M)$ having the properties that \begin{equation}\label{kprops} \phi_{K}^{1}=\phi_{H}^{1}\qquad \mbox{and}\qquad \int_{0}^{1}\osc_{\phi_{K}^{t}(A_0)}K_t dt = \int_{0}^{1}\osc_{A_0}H_t dt.\end{equation}  In view of Proposition \ref{movingosc} this will obviously imply the inequality ``$\leq$`` in the statement of the theorem, while the inequality ``$\geq$'' just follows from the fact that $\osc_M H_t\geq \osc_{A_0}H_t$.

For a general $G\in C^{\infty}_{0}([0,1]\times M)$, consider the two functions \[ \bar{G}(t,m) = -G(t,\phi_{G}^{t}(m))\qquad \widehat{G}(t,m) = -G(1-t,m) \]

A standard calculation shows that $\bar{G}$ generates the Hamiltonian isotopy $\phi_{\bar{G}}^{t}=(\phi_{G}^{t})^{-1}$.  Meanwhile, $\widehat{G}$ is designed to have the property that a map $\gamma\co [0,1]\to M$ obeys $\gamma'(t) = X_{G_t}(\gamma(t))$ if and only if the time-reversed map $\widehat{\gamma}(t)=\gamma(1-t)$ obeys $\widehat{\gamma}'(t) = X_{\widehat{G}_t}(\widehat{\gamma}(t))$.  In other words, time-one Hamiltonian flowlines for $\widehat{G}$ are precisely time-reversals of time-one flowlines of $G$; at the level of isotopies this yields \[ \phi_{\widehat{G}}^{t} = \phi_{G}^{1-t}\circ(\phi_{G}^{1})^{-1} \]   In particular we have \[ \phi_{\bar{G}}^{1}=\phi_{\widehat{G}}^{1}=(\phi_{G}^{1})^{-1} \]

With this said, given $H\co [0,1]\times M\to\R$ we now produce the function $K\co [0,1]\times M\to\R$ promised in the first paragraph of the proof: \[ K=\overline{(\widehat{H})}\qquad \mbox{i.e.,}\qquad K(t,m) = -\widehat{H}(t,\phi_{\widehat{H}}^{t}(m)) = H\left(1-t,\phi_{H}^{1-t}((\phi_{H}^{1})^{-1}(m))\right)\]

We can quickly verify that the two properties in (\ref{kprops}) are satisfied: first of all, \[ \phi_{K}^{1}=\phi_{\overline{(\widehat{H})}}^{1}=(\phi_{\widehat{H}}^{1})^{-1}=((\phi_{H}^{1})^{-1})^{-1}=\phi_{H}^{1}.\]  Meanwhile, we have $\phi_{K}^{t}=(\phi_{\widehat{H}}^{t})^{-1}$ and so, for $(t,m)\in [0,1]\times M$, \[
K(t,\phi_{K}^{t}(m)) = -\widehat{H}(t,\phi_{\widehat{H}}^{t}(\phi_{K}^{t}(m)))=-\widehat{H}(t,m)=H(1-t,m). \]  
From this we see immediately that, for all $t$, \[ \osc_{\phi_{K}^{t}(A_0)} K_t = \osc_{A_0}H_{1-t} \] and hence \[ \int_{0}^{1}\osc_{\phi_{K}^{t}(A_0)}K_t dt = \int_{0}^{1}\osc_{A_0} H_{1-t}dt = \int_{0}^{1}\osc_{A_0}H_t dt,\] proving the second part of (\ref{kprops}) and hence the theorem.
\end{proof}

\begin{remark}
In cases where the Hamiltonian $H$ is time-independent simply setting $K=H$ in the above proof will of course lead to a Hamiltonian obeying (\ref{kprops}), in view of the conservation of energy property $H\circ\phi_{H}^{t}=H$.  In this situation one has $\widehat{H}=\overline{H}$, and so the Hamiltonian produced by our proof is indeed just $H$.  However in the time-dependent case $\widehat{H}$ and $\overline{H}$ will generally be distinct and the Hamiltonian $K$ in the proof will generate a different isotopy from the identity to $\phi_{H}^{1}$ than does $H$.
\end{remark}

\begin{remark} 
Since $\delta$ is symmetric and since $\osc_{A_1}H_t = \osc_{A_1}\hat{H}_{1-t}$, it follows from Theorem \ref{main} that we also have \[ \delta(A_0,A_1)=\inf\left\{\left.\int_{0}^{1}\osc_{A_1}H_t dt\right|\phi_{H}^{1}(A_0)=A_1\right\} \]  One can also prove this directly in the style of the above proof, by setting $K$ equal to $\widehat{(\overline{H})}$ instead of $\overline{(\widehat{H})}$ and observing that one then has $K(t,\phi_{K}^{t}(m)) = H(1-t,\phi_{H}^{1}(m))$ for all $(t,m)\in [0,1]\times M$.
\end{remark}

To connect this to the sorts of estimates described in at the beginning of this section, recall that the \emph{displacement energy} of the closed set $A$ is by definition \[ e(A) = \inf\left\{\|\phi\|_H\left|\phi(A)\cap A=\varnothing\right.\right\}.\]  For another subset $U\subset M$ (presumably intersecting $A$) we likewise define \[ e(A,U) = \inf\left\{\|\phi\|_H\left| \phi(A)\cap \bar{U}=\varnothing\right.\right\}.\]  As originally formulated, the results described in the first paragraph of this section (and many others like them) are lower bounds for $e(A)$ or $e(A,U)$ for various classes of $A$ and $U$.

\begin{cor} We have \begin{align*} e(A) &= \inf\left\{\left.\int_{0}^{1}\osc_A H_t dt\right|\phi_{H}^{1}(A)\cap A=\varnothing\right\} \quad \mbox{and}
\\ e(A,U) &= \inf\left\{\left.\int_{0}^{1}\osc_A H_t dt\right|\phi_{H}^{1}(A)\cap \bar{U}=\varnothing\right\}  \end{align*}
\end{cor}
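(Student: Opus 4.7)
The plan is to observe that both formulas for $e(A)$ and $e(A,U)$ are essentially immediate reformulations of Theorem \ref{main}, obtained by decomposing the infimum defining each quantity according to where $A$ gets sent.

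For the first equality, I would rewrite
\[
e(A) = \inf_{\substack{A_1\in\mathcal{L}(A)\\ A_1\cap A=\varnothing}} \inf\{\|\phi\|_H \mid \phi(A)=A_1\} = \inf_{\substack{A_1\in\mathcal{L}(A)\\ A_1\cap A=\varnothing}} \delta(A,A_1),
\]
and then apply Theorem \ref{main} to each inner infimum to replace $\delta(A,A_1)$ with $\inf\{\int_0^1 \osc_A H_t\,dt\mid \phi_H^1(A)=A_1\}$. Swapping the order of the two infima (and noting that $\phi_H^1(A)$ automatically lies in $\mathcal{L}(A)$) gives
\[
e(A)=\inf\left\{\int_0^1 \osc_A H_t\,dt\;\Big|\;\phi_H^1(A)\cap A=\varnothing\right\},
\]
as desired. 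The inequality ``$\geq$'' is in fact immediate from $\osc_A H_t\leq \osc_M H_t$, so the work is entirely in the ``$\leq$'' direction, which is where Theorem \ref{main} does all the heavy lifting.

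The argument for $e(A,U)$ is identical in structure: partition according to the choice of $A_1=\phi(A)$ subject now to the condition $A_1\cap\bar{U}=\varnothing$ rather than $A_1\cap A=\varnothing$, apply Theorem \ref{main} to each $\delta(A,A_1)$, and then recombine the infima. I do not anticipate any real obstacle here; the only point requiring a moment's attention is verifying that the condition ``$\phi_H^1(A)\cap \bar U=\varnothing$'' (which is a condition on the orbit element $A_1=\phi_H^1(A)$, not on $H$ itself) interchanges cleanly with the infimum over $H$ produced by Theorem \ref{main}, but this is routine.
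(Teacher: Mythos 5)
Your proposal is correct and matches the paper's argument: both rewrite $e(A,U)$ as an infimum of $\delta(A,A')$ over $A'\in\mathcal{L}(A)$ with $A'\cap\bar U=\varnothing$ and then invoke Theorem \ref{main}. The only cosmetic difference is that the paper dispatches the first identity by noting $e(A)=e(A,A)$ rather than repeating the decomposition.
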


\begin{proof} Since $A$ is assumed to be closed we have by definition $e(A)=e(A,A)$, so the first equation is a special case of the second.  For the second, simply note that, as an easy consequence of the definitions, \[ e(A,U) = \inf\left\{\delta(A,A')|A'\in\mathcal{L}(A),\,A'\cap \bar{U}=\varnothing\right\} \] and apply Theorem \ref{main}.
\end{proof}

The estimates in \cite{BM},\cite{HLS} that motivated this section were lower bounds for the right-hand side in the above corollary; we thus see that any of the numerous methods for estimating $e(A,U)$ in fact yields a similar estimate for this right-hand side.

\section{New properties of the rigid locus} \label{delta}

An immediate consequence of Theorem \ref{main} is that, for any closed subset $A\subset M$, if a function $H\in C^{\infty}_{0}([0,1]\times M)$ obeys $H|_{[0,1]\times A}=0$, then $\delta(A,\phi_{H}^{1}(A)) = 0$.\footnote{Of course this is not surprising in the special case that $A$ is a coisotropic submanifold, since then the hypothesis implies that $\phi_{H}^{1}(A)=A$.}  We will obtain below in Proposition \ref{irav} a strengthening of this result, in preparation for which we now recall some terminology from \cite{U}.

Again fixing a closed subset $A\subset M$, we write \[ \bar{\Sigma}_A = \{\phi\in Ham(M,\omega)|\delta(A,\phi(A)) = 0\}.\] (The notation refers to the fact that this is the closure of the stabilizer $\Sigma_A$ of $A$ with respect to the Hofer topology on $Ham(M,\omega)$; in particular $\bar{\Sigma}_A$ is a subgroup of $Ham(M,\omega)$, see \cite[Proposition 2.2]{U}.)  The \emph{rigid locus} of $A$ is then defined to be the set \[ R_A = \bigcap_{\phi\in\bar{\Sigma}_A}\phi^{-1}(A) \] So obviously $R_A\subset A$ (take $\phi=1_M$). It is easy to see that if $R_A=A$ then $\delta$ is nondegenerate on $\mathcal{L}(A)$.  A less obvious fact (originally proven as \cite[Lemma 4.2(iii)]{U}; this is also a special case of Proposition \ref{flex} below) is that if $R_A=\varnothing$ then $\delta$ vanishes identically on $\mathcal{L}(A)$.

Our main results in this section are strong new restrictions on the structure of the rigid locus $R_A$ (Corollaries \ref{raco} and \ref{notsmall}) which are then applied in Corollary \ref{halfdim} and Theorem \ref{subvar} to obtain new classes of subsets $A$ for which it always holds that $R_A=\varnothing$ and hence that the pseudometric $\delta$ vanishes identically.

For any open subset $U\subset M$ let $Ham_U$ denote the subgroup of $Ham(M,\omega)$ consisting of Hamiltonian diffeomorphisms generated by (extensions by zero of) Hamiltonians $H\in C^{\infty}_{0}([0,1]\times U)$.

\begin{prop}\label{flex} For any closed set $A\subset M$ we have
\[ Ham_{M\setminus R_A}\subset \bar{\Sigma}_A. \]
\end{prop}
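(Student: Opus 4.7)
My plan reduces Proposition~\ref{flex} to the observation recorded in the first sentence of this section: by Theorem~\ref{main}, any $K\in C^{\infty}_{0}([0,1]\times M)$ vanishing identically on $[0,1]\times A$ satisfies $\phi_{K}^{1}\in \bar{\Sigma}_A$. The reduction is by fragmentation combined with a conjugation trick using elements of $\bar{\Sigma}_A$ that locally push $M\setminus R_A$ off $A$.

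The key local property comes straight from the definition $R_A=\bigcap_{\psi\in\bar{\Sigma}_A}\psi^{-1}(A)$: each $m\in M\setminus R_A$ admits some $\psi_m\in\bar{\Sigma}_A$ with $\psi_m(m)\notin A$ (one may take $\psi_m=1_M$ if $m\notin A$), and since $A$ is closed, continuity of $\psi_m$ supplies an open neighborhood $V_m\ni m$ with $\psi_m(V_m)\cap A=\varnothing$. Given $H\in C^{\infty}_{0}([0,1]\times(M\setminus R_A))$, the projection of $\mathrm{supp}(H)$ to $M$ is a compact subset of $M\setminus R_A$ and so is covered by finitely many such $V_{m_1},\ldots,V_{m_n}$. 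The standard fragmentation of Hamiltonian diffeomorphisms subordinate to an open cover (going back to Banyaga) then produces a factorization $\phi_{H}^{1}=\phi_{H_1}^{1}\circ\cdots\circ\phi_{H_N}^{1}$ with each $H_j\in C^{\infty}_{0}([0,1]\times V_{m_{i(j)}})$.

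For each factor, setting $\psi=\psi_{m_{i(j)}}$, the conjugate $\psi\circ\phi_{H_j}^{1}\circ\psi^{-1}$ is generated by $H_j\circ\psi^{-1}$, which is supported in $\psi(V_{m_{i(j)}})\subset M\setminus A$ and hence vanishes on $[0,1]\times A$. The opening observation gives $\psi\circ\phi_{H_j}^{1}\circ\psi^{-1}\in\bar{\Sigma}_A$; since $\bar{\Sigma}_A$ is a subgroup of $Ham(M,\omega)$ containing $\psi$ by \cite[Proposition 2.2]{U}, we deduce $\phi_{H_j}^{1}\in\bar{\Sigma}_A$, and the same group property concludes that $\phi_{H}^{1}\in\bar{\Sigma}_A$. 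The only nontrivial ingredient is the fragmentation step; the rest follows formally from the definition of $R_A$, the group property of $\bar{\Sigma}_A$, and the standard conjugation identity producing $H\circ\psi^{-1}$ as the generator of $\psi\phi_H^t\psi^{-1}$ when $\psi$ is a symplectomorphism.
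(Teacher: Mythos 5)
Your proof is correct and follows essentially the same route as the paper: the definition of $R_A$ supplies local pushoffs $\psi_m\in\bar{\Sigma}_A$, Banyaga's fragmentation lemma decomposes a given diffeomorphism subordinate to the resulting cover, and conjugation by $\psi_m$ reduces each factor to one supported away from $A$, which is absorbed by $\bar{\Sigma}_A$. The only cosmetic difference is that you invoke the opening observation from Theorem~\ref{main} for that last absorption step, whereas the paper uses the even more elementary fact that a Hamiltonian diffeomorphism supported in $\psi_m(V_m)\subset M\setminus A$ fixes $A$ pointwise and so already lies in $\Sigma_A\subset\bar{\Sigma}_A$.
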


\begin{proof}
The proof is very similar to that of \cite[Lemma 4.2(iii)]{U}, which concerns the case that $R_A=\varnothing$.  Given $x\in M\setminus R_A$ we may find $\psi_x\in \bar{\Sigma}_A$ such that $\psi_x(x)\notin A$; since $A$ is closed we can then find a neighborhood $U_x$ of $x$ such that $\psi_x(U_x)\cap A=\varnothing$.  Then \[ \psi_x Ham_{U_x}\psi_{x}^{-1} = Ham_{\psi_x(U_x)}\subset  \bar{\Sigma}_A \]  (indeed every element of $Ham_{\psi_x(U_x)}$ preserves $A$).
So since $\bar{\Sigma}_A$ is a subgroup of $Ham(M,\omega)$ and $\psi_x\in\bar{\Sigma}_A$ it follows that $Ham_{U_x}\leq \bar{\Sigma}_A$.

We have thus found an open cover $\{U_x\}_{x\in M\setminus R_A}$ of $M\setminus R_A$ with the property that each $Ham_{U_x}$ is contained in $\bar{\Sigma}_A$.  But the fragmentation lemma of \cite[III.3.2]{Ba} (applied to the symplectic manifold $M\setminus R_A$, which is an open subset of $M$) asserts that $Ham_{M\setminus R_A}$ is generated by $\cup_x Ham_{U_x}$, so that $Ham_{M\setminus R_A}\subset \bar{\Sigma}_A$.
\end{proof}

The following shows that a Hamiltonian which only vanishes on $R_A$, not necessarily on all of $A$, continues to have the property that its flow sends $A$ to sets which lie a distance zero away from $A$. 
\begin{prop} \label{irav}
Suppose that $H\in C^{\infty}_{0}([0,1]\times M)$ has $H|_{[0,1]\times R_A} = 0$.  Then $\phi_{H}^{s}\in \bar{\Sigma}_A$ for all $s\in [0,1]$.

\end{prop}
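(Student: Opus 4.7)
The plan is to approximate $H$ by Hamiltonians whose supports avoid $R_A$, apply Proposition \ref{flex} to these approximants, and then pass to a limit using the pseudometric property of $\delta$. Since $H$ vanishes on the closed set $[0,1]\times R_A$ and has compact support, it must be uniformly small on an open neighborhood of $R_A$; the corresponding cutoff of $H$ differs from $H$ by only a small amount in $\|\cdot\|_{\infty}$, and a small $C^0$ perturbation of the generating Hamiltonian changes its time-$s$ flow by a correspondingly small amount in the Hofer norm.

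In detail, first observe that for each $\epsilon>0$ the set $V_{\epsilon}=\{x\in M\,:\,\sup_{t}|H(t,x)|<\epsilon\}$ is an open neighborhood of $R_A$: openness follows from the fact that the projection $[0,1]\times M\to M$ is a closed map (as $[0,1]$ is compact), applied to the closed set $\{|H|\geq \epsilon\}$. Choose smooth cutoffs $\chi_n\in C^{\infty}(M,[0,1])$ that equal $1$ on a neighborhood of $R_A$ and are supported inside $V_{1/n}$, and set $H_n=(1-\chi_n)H$. Then each $H_n\in C^{\infty}_{0}([0,1]\times(M\setminus R_A))$, so Proposition \ref{flex} yields $\phi_{H_n}^{s}\in Ham_{M\setminus R_A}\subset \bar{\Sigma}_A$; in particular $\delta(A,\phi_{H_n}^{s}(A))=0$. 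On the other hand $\|H-H_n\|_{\infty}\leq 1/n$, and the standard formula for the Hamiltonian generating a composition of flows shows that $(\phi_{H_n}^{s})^{-1}\circ \phi_{H}^{s}$ is the time-$s$ map of $K_n(t,m)=(H-H_n)(t,\phi_{H_n}^{t}(m))$ on $[0,s]$, whose Hofer norm is bounded by
\[ \int_{0}^{s}\osc_M K_n(t,\cdot)\,dt \leq 2\|H-H_n\|_{\infty}\leq 2/n. \]
By conjugation-invariance of the Hofer norm the same bound holds for $\phi_{H}^{s}\circ(\phi_{H_n}^{s})^{-1}$.

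Finally, combining the trivial bound $\delta(A_0,A_1)\leq \|\psi\|_H$ whenever $\psi(A_0)=A_1$ with the triangle inequality for the pseudometric $\delta$ gives
\[ \delta(A,\phi_{H}^{s}(A)) \leq \delta(A,\phi_{H_n}^{s}(A)) + \delta(\phi_{H_n}^{s}(A),\phi_{H}^{s}(A)) \leq 0 + 2/n. \]
Letting $n\to\infty$ forces $\delta(A,\phi_{H}^{s}(A))=0$, i.e., $\phi_{H}^{s}\in\bar{\Sigma}_A$. The only delicate step is the construction of the cutoffs $\chi_n$ with $\|(1-\chi_n)H\|_{\infty}$ prescribed — that is, verifying that vanishing of $H$ on the (possibly complicated) closed set $R_A$ indeed forces $H$ to be uniformly small on an open neighborhood of $R_A$ — but this is purely topological and follows from the closed-map property noted above. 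The remainder is a routine combination of Proposition \ref{flex}, the pseudometric axioms for $\delta$, and the well-known $C^0$-to-Hofer continuity of the time-$s$ map.
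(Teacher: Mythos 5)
Your proof is correct and follows essentially the same strategy as the paper's: cut off $H$ to vanish near $R_A$ so that Proposition~\ref{flex} applies, then take a Hofer-topology limit (phrased as the closedness of $\bar{\Sigma}_A$ in the paper, and spelled out via the triangle inequality for $\delta$ in yours). The only material difference is cosmetic --- the paper achieves the cutoff by composing $H$ with a nondecreasing $f_n:\R\to\R$ that flattens the interval $(-\tfrac{1}{2n},\tfrac{1}{2n})$ to zero, which sidesteps the need to produce the neighborhoods $V_{1/n}$ and bump functions $\chi_n$ explicitly, and it first reduces to $s=1$ by reparametrizing, whereas you handle all $s$ at once.
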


\begin{proof} Where $H^s(t,m) = sH(st,m)$ for $s\in [0,1]$, we have $\phi_{H^s}^{1}=\phi_{H}^{s}$, so since $H^s|_{[0,1]\times R_A}=0$ whenever $H|_{[0,1]\times R_A}=0$ it suffices to prove the result for $s=1$.

So assume that $H|_{[0,1]\times R_A}=0$ and for any natural number $n$ let $f_n\co \R\to\R$ be a smooth, nondecreasing function such that $f_n(s)=s$ for $|s|\geq \frac{1}{n}$ and $f_n(s)=0$ for $|s|<\frac{1}{2n}$.  Then $\|f_n\circ H - H\|_{C^0}\leq \frac{1}{n}$, and so $\phi_{f_n\circ H}^{1}\to \phi_{H}^{1}$ as $n\to\infty$ with respect to the Hofer topology on $Ham(M,\omega)$.  But $f_n\circ H$ vanishes on the neighborhood $\{|H|<\frac{1}{2n}\}$ of $[0,1]\times R_A$ and has support contained in the (compact) support of $H$, so $\phi_{f_n\circ H}^{1}\in Ham_{M\setminus R_A}$.  Thus by Proposition \ref{flex}, $\phi_{f_n\circ H}^{1}\in \bar{\Sigma}_A$ for all $n$.  But $\bar{\Sigma}_A$ is closed in the Hofer topology, so it follows that $\phi_{H}^{1}\in \bar{\Sigma}_A$ also.
\end{proof}

For the rest of the paper we will focus on autonomous Hamiltonians $H\in C^{\infty}_{0}(M)$.  We continue to denote by $\phi_{H}^{t}$ the Hamiltonian flow of the function on $[0,1]\times M$ defined by $(t,m)\mapsto H(m)$.

For a general closed subset $B\subset M$ we denote 
\[ I_B = \left\{H\in C^{\infty}_{0}(M)\left|H|_B=0\right.\right\}.\]

\begin{cor}\label{raco} Where $\{F,G\}=\omega(X_F,X_G)$ is the Poisson bracket, the subset $I_{R_A}\subset C^{\infty}_{0}(M)$ is closed under $\{\cdot,\cdot\}$.
\end{cor}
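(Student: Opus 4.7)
The plan is to show that if $F,G\in I_{R_A}$ and $x\in R_A$, then $\{F,G\}(x)=0$ by producing, through the Hamiltonian flow of $G$, a curve in $R_A$ passing through $x$ along which $F$ vanishes, then differentiating.

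The first step is to observe that $\bar{\Sigma}_A$ preserves $R_A$ setwise. This is a purely formal consequence of the definition $R_A=\bigcap_{\phi\in\bar{\Sigma}_A}\phi^{-1}(A)$ together with the fact (from \cite[Proposition 2.2]{U}) that $\bar{\Sigma}_A$ is a subgroup: for any $\psi\in\bar{\Sigma}_A$,
\[ \psi(R_A)=\bigcap_{\phi\in\bar{\Sigma}_A}(\phi\psi^{-1})^{-1}(A)=\bigcap_{\phi'\in\bar{\Sigma}_A\psi^{-1}}(\phi')^{-1}(A)=R_A,\]
since $\bar{\Sigma}_A\psi^{-1}=\bar{\Sigma}_A$.

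Now, given $F,G\in I_{R_A}$, Proposition \ref{irav} applied to the time-independent Hamiltonian $G$ (viewed as a function on $[0,1]\times M$) yields $\phi_{G}^{t}\in\bar{\Sigma}_A$ for every $t\in[0,1]$. By the preceding paragraph, $\phi_{G}^{t}(R_A)=R_A$ for all $t$. Fixing any $x\in R_A$, we therefore have $\phi_{G}^{t}(x)\in R_A$ for every $t$, and since $F$ vanishes identically on $R_A$, the function $t\mapsto F(\phi_{G}^{t}(x))$ is identically zero. Differentiating at $t=0$ gives $dF_x(X_G(x))=0$, which under the sign conventions $\omega(\cdot,X_H)=dH$ and $\{F,G\}=\omega(X_F,X_G)$ means exactly $\{F,G\}(x)=-dF_x(X_G(x))=0$. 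Thus $\{F,G\}\in I_{R_A}$.

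The only conceptual step is the invariance of $R_A$ under $\bar{\Sigma}_A$; everything else is then automatic from Proposition \ref{irav} and differentiating the tautology that $F$ vanishes along any curve in $R_A$. Note that no smoothness of $R_A$ is required: the calculation takes place entirely in $M$, using that the orbit $t\mapsto\phi_{G}^{t}(x)$ happens to lie in $R_A$.
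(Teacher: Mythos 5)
Your proof is correct and follows essentially the same route as the paper: invariance of $R_A$ under $\bar{\Sigma}_A$, Proposition \ref{irav} applied to an autonomous Hamiltonian, and differentiation along the resulting flow. The only (inessential) difference is that you flow by $G$ and differentiate $F$, picking up a sign via antisymmetry, whereas the paper flows by $F$ and differentiates $G$.
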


\begin{proof}
Let $F,G\in I_{R_A}$.   It follows immediately from the definition of $R_A$ (and the fact that $\bar{\Sigma}_A$ is a subgroup of $Ham(M,\omega)$) that $R_A$ is preserved by all elements of $\bar{\Sigma}_A$, so since Proposition \ref{irav} asserts that $\phi_{F}^{t}\in \bar{\Sigma}_A$ for all $t$, we have $\phi_{F}^{t}(R_A)=R_A$ for all $t$.  So the fact that $G\in I_{R_A}$ implies that $G\circ\phi_{F}^{t}$ vanishes identically on $R_A$ for all $t$.  Thus for $x\in R_A$ we have \[ \{F,G\}(x)=(dG)_x(X_F) = \left.\frac{d}{dt}G(\phi_{F}^{t}(x))\right|_{t=0} = 0, \] \emph{i.e.} $\{F,G\}\in I_{R_A}$.
\end{proof}

\begin{remark}
Corollary \ref{raco} imposes rather strong restrictions on the possible geometry of the rigid locus $R_A$ of any closed subset.  
It is a standard (and easily checked) fact that if $B\subset M$ is a submanifold then $I_B$ is closed under $\{\cdot,\cdot\}$ if and only if $B$ is coisotropic.  Thus if the rigid locus is a submanifold then it is coisotropic.

Also we recover the fact (\cite[Corollary 4.5]{U}) that, if $A\subset M$ is a submanifold, $\delta$ can be nondegenerate on $\mathcal{L}(A)$ only if $A$ is coisotropic: indeed if $\delta$ were nondegenerate we would have $R_A=A$, and as just noted, given that $R_A=A$ is a submanifold $R_A$ is coisotropic.
\end{remark}

\begin{cor} \label{ball2}
Let $x\in R_A$, and suppose that $F_1,\ldots,F_k\in I_{R_A}$ have the property that $(dF_1)_x,\ldots,(dF_k)_x\in T^{*}_{x}M$ are linearly independent.  Then the map \begin{align*} \psi\co \R^k &\to M \\ (a_1,\ldots,a_k) &\mapsto \phi_{\sum a_i F_i}^{1}(x) \end{align*} has image contained in $R_A$, and restricts to a sufficiently small ball around the origin as an embedding.
\end{cor}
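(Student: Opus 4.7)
The proof factors into two essentially independent pieces. The first and more substantive piece is the inclusion $\psi(\mathbb{R}^k)\subset R_A$, which I would deduce from Proposition~\ref{irav} together with the closure properties of $I_{R_A}$ and $\bar\Sigma_A$. The second is a routine immersion check at the origin using the infinitesimal generator of the flow.

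For the inclusion, observe that $I_{R_A}$ is a linear subspace of $C^\infty_0(M)$, so for each $a=(a_1,\ldots,a_k)\in\mathbb{R}^k$ the autonomous Hamiltonian $H_a:=\sum_i a_iF_i$ again lies in $I_{R_A}$, i.e.\ vanishes on $R_A$. (Its compact support is automatic since the finitely many $F_i$ all have compact support.) Regarding $H_a$ as a time-independent element of $C^\infty_0([0,1]\times M)$, Proposition~\ref{irav} gives $\phi^1_{H_a}\in\bar\Sigma_A$. As already noted in the proof of Corollary~\ref{raco}, every element of $\bar\Sigma_A$ preserves $R_A$ setwise, so $\phi^1_{H_a}(x)\in R_A$ whenever $x\in R_A$; this is precisely the statement $\psi(a)\in R_A$.

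For the local embedding assertion, smooth dependence of ODEs on parameters guarantees that $\psi$ is smooth, and by the inverse function theorem it suffices to show that $d\psi_0$ is injective. I would compute $d\psi_0$ by the standard first-variation trick: differentiating the Hamiltonian flow equation $\tfrac{d}{dt}\phi^t_{H_a}=X_{H_a}\circ\phi^t_{H_a}$ in the parameters $a_i$ at $a=0$, and using that $H_0\equiv 0$ and $\phi^t_{H_0}=\mathrm{id}$, one finds $\left.\partial_{a_i}\right|_{a=0}\phi^t_{H_a}(x)=t\,X_{F_i}(x)$, hence $\left.\partial_{a_i}\right|_{a=0}\psi=X_{F_i}(x)$. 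Since $\omega$ is nondegenerate, the musical isomorphism $T^*_xM\to T_xM$ sends each $(dF_i)_x$ to $X_{F_i}(x)$, so the assumed linear independence of the covectors $(dF_i)_x$ transfers to linear independence of the vectors $X_{F_i}(x)$. Thus $d\psi_0$ is injective, $\psi$ is an immersion at $0$, and the standard fact that immersions are local embeddings shows that $\psi$ restricts to some sufficiently small open ball around the origin as an embedding.

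I do not foresee a substantive obstacle: once Proposition~\ref{irav} and the $\bar\Sigma_A$-invariance of $R_A$ from the proof of Corollary~\ref{raco} are in hand, the inclusion is immediate, and the immersion computation is textbook. The only minor care point is the one flagged above, namely that linear combinations of the $F_i$ remain in $C^\infty_0(M)$ so that Proposition~\ref{irav} applies without modification.
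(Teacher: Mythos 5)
Your proof is correct and matches the paper's argument essentially step for step: Proposition~\ref{irav} plus the $\bar\Sigma_A$-invariance of $R_A$ gives the inclusion $\psi(\R^k)\subset R_A$, and the computation $d\psi_0(e_i)=X_{F_i}(x)$ together with nondegeneracy of $\omega$ gives the immersion and hence the local embedding. The only difference is that you spell out the first-variation computation of $d\psi_0$ a bit more explicitly than the paper does.
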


\begin{proof} The linearization of $\psi$ at $\vec{0}\in \R^k$ sends the standard basis vectors $e_1,\ldots,e_k$ to $(X_{F_1})_x,\ldots,(X_{F_k})_x$, and these are linearly independent by the assumption that $(dF_1)_x,\ldots,(dF_k)_x\in T^{*}_{x}M$ are linearly independent.  Thus the restriction of $\psi$ to a suitably small neighborhood of $\vec{0}$ is an immersion, and its restriction to a smaller neighborhood is an embedding.

Because each function $\sum a_i F_i$ belongs to $I_{R_A}$, by Proposition \ref{irav} we have $\phi_{\sum a_i F_i}^{1}\in\bar{\Sigma}_A$ for each $\vec{a}$.  Since $R_A$ is preserved by the action of any element of $\bar{\Sigma}_A$, and since $x\in R_A$, for each $\vec{a}\in \R^k$ it follows that $\psi(\vec{a}) = 
\phi_{\sum a_i F_i}^{1}(x)\in R_A$.
\end{proof}

The following resolves a question that was raised in \cite[Section 4.2]{U}.

\begin{cor}\label{notsmall}
Let $A\subset M$ be any closed subset such that $R_A\neq \varnothing$ and suppose that $N\subset M$ is any submanifold which is closed as a subset.  If $\dim N<\frac{1}{2}\dim M$ then $N$ does not contain $R_A$, while if $N$ is connected and $\dim N=\frac{1}{2}\dim M$ then $N$ does not properly contain $R_A$.
\end{cor}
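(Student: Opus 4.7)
The plan is to assume $R_A\subset N$ (with proper containment in Case 2) and derive a contradiction by applying Corollary \ref{ball2} at an arbitrary $x\in R_A\subset N$. Using slice coordinates around $x$ in which $N$ is cut out by the vanishing of $k:=\dim M-\dim N$ of the coordinate functions, and multiplying these by a bump function equal to $1$ near $x$ and supported in the chart, I would produce $F_1,\ldots,F_k\in C^{\infty}_{0}(M)$ that vanish on all of $N$ (in particular on $R_A$, so that each $F_i\in I_{R_A}$) and have differentials at $x$ spanning the conormal space $\mathrm{Ann}(T_xN)\subset T_x^*M$. Corollary \ref{ball2} then produces a $k$-dimensional embedded disk $D\subset R_A$ through $x$, and since $R_A\subset N$ we have $D\subset N$.

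In Case 1, $\dim N<\tfrac{1}{2}\dim M$ forces $k>\tfrac{1}{2}\dim M>\dim N$; but then $D$ is a $k$-dimensional embedded submanifold of $M$ contained in the $\dim N$-dimensional $N$, which is impossible (at $x$ the containment $T_xD\subset T_xN$ would demand $k\le\dim N$).

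In Case 2, $k=\tfrac{1}{2}\dim M=\dim N$, so $D$ and $N$ have the same dimension; invariance of domain, applied to the smooth injective map from a small ball in $\mathbb{R}^k$ into the $k$-manifold $N$ supplied by Corollary \ref{ball2}, shows that $D$ is open in $N$ near $x$. Since $x\in R_A$ was arbitrary, $R_A$ is open in $N$. But $R_A=\bigcap_{\phi\in\bar{\Sigma}_A}\phi^{-1}(A)$ is a closed subset of $M$ and hence of $N$. Connectedness of $N$ together with $R_A\neq\varnothing$ then forces $R_A=N$, contradicting the assumed proper containment.

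The only non-formal step is constructing the $F_i$ globally on $M$ while ensuring they vanish on all of $N$ rather than merely locally; this is where the hypothesis that $N$ is closed as a subset comes in, since the bump-function cutoff of local defining functions then automatically vanishes on $N$ outside the coordinate chart as well. Once the $F_i$ are in hand, the contradiction in either case is a matter of dimension counting (Case 1) or clopenness in a connected space (Case 2).
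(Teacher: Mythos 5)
Your argument is correct and follows the paper's proof essentially step for step: slice coordinates plus a bump function produce $F_1,\ldots,F_k\in I_{R_A}$ with independent differentials at $x$, Corollary \ref{ball2} gives a $k$-ball embedded in $R_A\subset N$, and then Case 1 is a dimension count while Case 2 combines openness (via invariance of domain) with the closedness of $R_A$ and connectedness of $N$. One small quibble on your closing remark: the $F_i$ vanish on $N$ outside the coordinate chart because they have compact support inside the chart, not because $N$ is closed; the closedness of $N$ is a background hypothesis not actually invoked at that point, and the construction would go through for any embedded submanifold around which slice charts exist.
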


\begin{proof}  Suppose to the contrary that we have $R_A\subset N$ where $N$ is as in the statement of the corollary.  Let $k=\dim M-\dim N$, and choose any $x\in R_A$.  We can then obtain functions $F_1,\ldots,F_k$ as in Corollary \ref{ball2} by taking a coordinate chart around $x$ in which $N$ appears as $\{\vec{0}\}\times \R^{\dim M - k}$ and then multiplying the first $k$ coordinate functions by a cutoff function which is equal to $1$ on a small neighborhood of $x$. Hence Corollary \ref{ball2} gives an embedding $B^{k}(\delta)\hookrightarrow R_A\subset N$ of a small $k$-dimensional ball $B^{k}(\delta)$, with image containing $x$.  

If $\dim N <\frac{1}{2}\dim M$ this immediately gives a contradiction since in this case $k>\dim N$ but we have just embedded a $k$-dimensional ball into $N$.  In the remaining case that $\dim N=\frac{1}{2}\dim M$ (so $\dim N=k$) the $k$-dimensional ball that we have embedded into $R_A\subset N$ necessarily contains a neighborhood of $x$ in $N$.  Since $x\in R_A$ was chosen arbitrarily this proves that $R_A$ is open in $N$.  But as an immediate consequence of its definition, $R_A$ is also closed.  So since $N$ is assumed connected and by hypothesis $R_A\neq \varnothing$, it must be that $R_A=N$.
\end{proof}

\begin{cor} \label{halfdim}
Let $A\subset M$ be a submanifold of dimension $\frac{1}{2}\dim M$ which is connected and closed as a subset.  Then the pseudometric $\delta$ on $\mathcal{L}(A)$ either vanishes identically or is nondegenerate.  In particular if $A$ is not Lagrangian then $\delta$ vanishes identically.
\end{cor}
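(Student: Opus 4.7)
The plan is to use the earlier results to pin down the structure of $R_A$ very strongly: since $R_A\subset A$ and $A$ is itself a connected, closed, half-dimensional submanifold, I would apply Corollary \ref{notsmall} with $N:=A$. That corollary states that no such $N$ can properly contain a nonempty $R_A$, so the only possibilities are $R_A=\varnothing$ or $R_A=A$. This dichotomy on $R_A$ is the key structural input.

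Next I would translate this dichotomy into the claimed dichotomy on $\delta$. If $R_A=\varnothing$, Proposition \ref{flex} yields $Ham(M,\omega)=Ham_{M\setminus R_A}\subset \bar{\Sigma}_A$, so $\delta(A,\phi(A))=0$ for every $\phi$ and $\delta$ vanishes identically on $\mathcal{L}(A)$. If instead $R_A=A$, then for any $\phi\in Ham(M,\omega)$ with $\delta(A,\phi(A))=0$ we have $\phi\in\bar{\Sigma}_A$, and since $R_A$ is preserved by every element of $\bar{\Sigma}_A$ this forces $\phi(A)=A$; thus $\delta$ is nondegenerate on $\mathcal{L}(A)$. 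The two cases together give the first assertion.

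Finally, to deduce that non-Lagrangian $A$ always falls into the vanishing case, I would invoke Corollary \ref{raco} (specifically the remark afterwards, which says that if $R_A$ is a submanifold then it must be coisotropic). If $R_A=A$, the submanifold $A$ would have to be coisotropic, and a coisotropic submanifold of dimension $\tfrac{1}{2}\dim M$ is automatically Lagrangian. Contrapositively, if $A$ is not Lagrangian then $R_A\neq A$, so by the dichotomy above $R_A=\varnothing$ and $\delta$ vanishes identically. There is no real obstacle in the argument; the content of the corollary is essentially the assembly of the structural constraints already established in Corollaries \ref{raco} and \ref{notsmall}, and the only thing to verify carefully is that Corollary \ref{notsmall} applies to $N=A$ itself, which is immediate from the hypotheses.
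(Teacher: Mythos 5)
Your proposal is correct and follows essentially the same route as the paper: apply Corollary \ref{notsmall} with $N=A$ to force the dichotomy $R_A=\varnothing$ or $R_A=A$, translate that into the dichotomy for $\delta$, and then use the coisotropic constraint from Corollary \ref{raco} to rule out $R_A=A$ in the non-Lagrangian case. The only cosmetic difference is that the paper cites \cite[Lemma 4.2]{U} for the translation between the $R_A$-dichotomy and the $\delta$-dichotomy, whereas you re-derive it directly from Proposition \ref{flex} and the fact that $\bar{\Sigma}_A$ preserves $R_A$; both are fine.
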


\begin{proof}
Taking $N=A$ in Corollary \ref{notsmall}, since one always has $R_A\subset A$ we see that the hypothesis implies that 
either $R_A=A$ or $R_A=\varnothing$, \emph{i.e.} (by \cite[Lemma 4.2]{U}) either $\delta$ is nondegenerate or $\delta$ vanishes identically.  If $A$ is not Lagrangian (equivalently, not coisotropic) then the first alternative cannot hold (by \cite[Corollary 4.5]{U}, or Corollary \ref{raco} above).
\end{proof}

\begin{remark} As mentioned earlier, Chekanov showed in \cite{Ch00} that if $(M,\omega)$ is geometrically bounded and $A$ is a compact Lagrangian submanifold then $\delta$ is nondegenerate.  The same paper contains an example (attributed to Sikorav) of a compact Lagrangian submanifold of a non-geometrically-bounded symplectic manifold for which $\delta$ vanishes identically.
\end{remark}

\begin{remark} 
For submanifolds of codimension strictly between $1$ and $\frac{1}{2}\dim M$ it is possible for $\delta$ to neither be nondegenerate nor vanish identically, as explained in \cite[Remark 1.5]{U}.
\end{remark}

\begin{remark}\label{sublag}  Corollary \ref{notsmall} also evidently implies (again taking $N=A$) that if $A\subset M$ is a connected closed submanifold of dimension at most $\frac{1}{2}\dim M$ and if $B\subset A$ is any proper closed subset then $R_B=\varnothing$ and so $\delta$ vanishes identically on $\mathcal{L}(B)$.
\end{remark}


\subsection{Subvarieties}

In this subsection we prove Theorem \ref{subvar}, asserting that $\delta$ vanishes identically on $\mathcal{L}(A)$ whenever $A$ is a (possibly singular) complex analytic subvariety of a K\"ahler manifold.  Accordingly let $(M,\omega,J)$ be a K\"ahler manifold  (so $\omega$ is a symplectic form and $J$ is an $\omega$-compatible integrable almost complex structure).  We then obtain a Riemannian metric $g\co TM\times_M TM\to\R$ defined by $g(v,w) = \omega(v,Jw)=\omega(-Jv,w)$.  Define maps $\theta_{\omega},\theta_g\co TM\to T^*M$ by $\theta_{\omega}(v)=\omega(v,\cdot)$ and likewise $\theta_g(v)=g(v,\cdot)$. Thus $\theta_{g}=-\theta_{\omega}\circ J$.  Since $\omega$ is nondegenerate, $\theta_{\omega}$ and $\theta_{g}$ are invertible, and we see that $\theta_{g}^{-1}=J\circ\theta_{\omega}^{-1}$.  Define the dual metric $g^*\co T^*M\times_M T^*M\to \R$ by $g^{*}(\alpha,\beta)=g(\theta_{g}^{-1}(\alpha),\theta_{g}^{-1}(\beta))$.  Of course by the definition of $\theta_g$ we have $g^{*}(\alpha,\beta)=\alpha(\theta_{g}^{-1}(\beta))$.  

\begin{prop}\label{holpois}
Let $U\subset M$ be an open subset and let $f\co U\to\C$ be a holomorphic function, written as $f=u+iv$ where $u,v\co U\to \R$.  Then the Poisson bracket of $u$ and $v$ is given everywhere on $U$ by \[ \{u,v\}=g^{*}(du,du) = g^{*}(dv,dv) \]
\end{prop}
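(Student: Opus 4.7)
The approach is to reduce the statement to the Cauchy--Riemann equations $df \circ J = i\,df$, which split into the two identities $du \circ J = -dv$ and $dv \circ J = du$, and to combine them with the single linking identity $\theta_\omega^{-1} = -J \circ \theta_g^{-1}$ (immediate from the paper's relation $\theta_g = -\theta_\omega \circ J$ together with $J^2 = -1$) between the symplectic and Riemannian dualities.

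The first step is to rewrite the Hamiltonian vector field in metric terms. The convention $\omega(\cdot, X_H) = dH$ forces $X_H = -\theta_\omega^{-1}(dH) = J\,\theta_g^{-1}(dH)$, so $X_H$ is simply $J$ applied to the $g$-gradient of $H$. Then I would expand
\[ \{u,v\} = \omega(X_u, X_v) = dv(X_u) = dv\bigl(J\,\theta_g^{-1}(du)\bigr) = (dv \circ J)\bigl(\theta_g^{-1}(du)\bigr), \]
and invoke the Cauchy--Riemann identity $dv \circ J = du$ together with the paper's formula $g^{*}(\alpha, \beta) = \alpha(\theta_g^{-1}\beta)$ to collapse this to $g^{*}(du, du)$.

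For the second equality $g^{*}(du, du) = g^{*}(dv, dv)$ the cleanest route is to observe that $if = -v + iu$ is also holomorphic; applying the just-proved identity with $f$ replaced by $if$ yields $\{-v, u\} = g^{*}(dv, dv)$, and antisymmetry of the Poisson bracket rewrites the left-hand side as $\{u, v\}$. (One could equally well redo the direct computation with $u$ and $v$ interchanged, using the other Cauchy--Riemann identity.)

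The only real obstacle is sign-bookkeeping: two minus signs enter, one from $X_H = -\theta_\omega^{-1}(dH)$ and one from $\theta_g = -\theta_\omega \circ J$, and they must cancel so that the final answer is $+g^{*}(du, du)$ rather than its negative; once these are tracked, the argument is purely mechanical.
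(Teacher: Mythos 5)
Your proof of the first equality is essentially identical to the paper's: same rewriting $X_u = -\theta_\omega^{-1}(du) = J\theta_g^{-1}(du)$, same expansion $\{u,v\} = dv(X_u)$, same invocation of $dv\circ J = du$. The only (minor, equally valid) difference is in the second equality, where the paper argues directly that $J$ is a $g$-isometry so its adjoint is a $g^*$-isometry, whereas you redeploy the first equality to the holomorphic function $if = -v + iu$ and use antisymmetry of the Poisson bracket — a clean self-contained shortcut that trades a geometric observation for an algebraic one.
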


\begin{proof}
In our present notation the Hamiltonian vector field of $u$ is given by $X_u=-\theta_{\omega}^{-1}(du)$.  So 
\[
\{u,v\} = \omega(X_u,X_v) = dv(X_u) = -dv(\theta_{\omega}^{-1}(du)) = dv(J\theta_{g}^{-1}(du)) 
\]  But the Cauchy--Riemann equation for the holomorphic function $f$ amounts to the statement that $dv\circ J = du$, so the above gives $\{u,v\}=du(\theta_{g}^{-1}(du)) = g^{*}(du,du)$.  

Meanwhile since $J$ is an isometry with respect to $g$, the adjoint of $J$ is an isometry with respect to $g^{*}$, and so the fact that $dv\circ J=du$ implies that $g^{*}(dv,dv)=g^{*}(du,du)$.
\end{proof}

\begin{dfn}
Let $A$ be a closed subset of the K\"ahler manifold $(M,\omega,J)$ and let $x\in X$.  A \textbf{holomorphic reducing chart} $(U,V,\psi,f)$  for $A$ around $x$ consists of the following data:
\begin{itemize} \item A connected open neighborhood $U\subset M$ of $x$ having compact closure.
\item An open set $V\subset\C^n$, and a holomorphic chart $\psi\co V\to M$ such that $\bar{U}\subset \psi(V)$.
\item A holomorphic function $f\co V\to \C$ such that $f|_{\psi^{-1}(R_A)}=0$ where $R_A$ is the rigid locus of $A$
\end{itemize}
\end{dfn}

\begin{prop}\label{redind} If $(U,V,\psi,f)$ is a holomorphic reducing chart for $A$ around $x$ then there is an open subset $V'\subset V$ which contains $\psi^{-1}(\bar{U})$ such that, for each $j=1,\ldots,n$, $\left(U,V',\psi|_{V'},\frac{\partial f}{\partial z_j}\right)$ is also a holomorphic reducing chart for $A$ around $x$.
\end{prop}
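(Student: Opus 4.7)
The plan is to choose $V'$ small enough around $\psi^{-1}(\bar U)$ that the routine conditions (openness, containment of $\psi^{-1}(\bar U)$, holomorphicity of $\frac{\partial f}{\partial z_j}$, and $\bar U \subset \psi(V')$) are automatic, and then verify the only nontrivial condition, namely that each $\frac{\partial f}{\partial z_j}$ vanishes on $\psi^{-1}(R_A) \cap V'$. The key is to combine the Poisson-closure of $I_{R_A}$ (Corollary \ref{raco}) with the positivity formula $\{u,v\} = g^*(du,du)$ of Proposition \ref{holpois}.

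Writing $f = u + iv$ with $u,v \co V \to \R$ and pushing forward by $\psi$, set $\tilde u = u\circ\psi^{-1}$ and $\tilde v = v\circ\psi^{-1}$ on $\psi(V)$. Choose a cutoff $\chi \in C^\infty_0(\psi(V))$ that equals $1$ on an open set $W$ with $\bar U \subset W \subset \bar W \subset \psi(V)$. Extended by zero, the functions $\chi\tilde u$ and $\chi\tilde v$ lie in $C^\infty_0(M)$, and since $f|_{\psi^{-1}(R_A)} = 0$ forces both $\tilde u$ and $\tilde v$ to vanish on $R_A \cap \psi(V)$, they belong to $I_{R_A}$. Corollary \ref{raco} then gives $\{\chi\tilde u,\chi\tilde v\} \in I_{R_A}$. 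On $W$, where $\chi \equiv 1$, this bracket coincides with $\{\tilde u,\tilde v\}$, and Proposition \ref{holpois} (applied to the holomorphic function $\tilde f = f\circ\psi^{-1}$) identifies the latter with $g^*(d\tilde u, d\tilde u)$.

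Since $g^*(d\tilde u, d\tilde u)$ is a pointwise squared norm with respect to a nondegenerate positive metric, its vanishing at any $y \in R_A \cap W$ forces $(d\tilde u)_y = 0$, and by the second equality in Proposition \ref{holpois} (or the symmetric argument with the roles of $\tilde u$ and $\tilde v$ swapped) also $(d\tilde v)_y = 0$. Pulling back via $\psi$ gives $df = du + i\,dv = 0$ at every point of $\psi^{-1}(R_A) \cap \psi^{-1}(W)$. Holomorphicity of $f$ means $df = \sum_{j=1}^n \frac{\partial f}{\partial z_j}\,dz_j$, so the vanishing of $df$ at a point forces each $\frac{\partial f}{\partial z_j}$ to vanish there.

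Setting $V' = V \cap \psi^{-1}(W)$ then gives an open subset of $V$ with $\psi^{-1}(\bar U) \subset V'$ and $\psi(V') = W \supset \bar U$, on which each $\frac{\partial f}{\partial z_j}$ vanishes along $\psi^{-1}(R_A)$. The main subtlety is the cutoff: Corollary \ref{raco} requires compactly supported elements of $I_{R_A}$ on all of $M$, so one must arrange that multiplying by $\chi$ does not alter the bracket on a full neighborhood of $\bar U$, which is exactly why $\chi$ is chosen to equal $1$ on the slightly larger open set $W$.
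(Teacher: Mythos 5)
Your proof is correct and follows essentially the same route as the paper: cut off $f\circ\psi^{-1}$ by a bump function to land in $C^\infty_0(M)$, invoke Corollary \ref{raco} to get $\{u,v\}\in I_{R_A}$, apply Proposition \ref{holpois} on the region where the cutoff is identically $1$ to identify the bracket with the positive quantity $g^*(du,du)$, and conclude $df=0$ on $R_A$ there. The paper packages the cutoff as $\beta\cdot(f\circ\psi^{-1})$ and names $U'$ what you call $W$, but the argument is the same.
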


\begin{proof}
Let $\beta\co M\to[0,1]$ be a smooth function which is identically equal to $1$ on some open set $U'$ containing $\bar{U}$ but whose support is compact and contained in $\psi(V)$.  Define $F\co M\to \C$ to be equal to $\beta\cdot(f\circ\psi^{-1})$ on $\psi(V)$ and to $0$ on $M\setminus\psi(V)$, and let $u$ and $v$ be, respectively, the real and imaginary parts of $F$.  Since $f|_{\psi^{-1}(R_A)}=0$, we have $u|_{R_A}=v|_{R_A}=0$.
So by Corollary \ref{raco}, $\{u,v\}|_{R_A}=0$. 

Now the restriction of $F=u+iv$ to $U'$ is holomorphic, so by Proposition \ref{holpois} we have $\{u,v\}|_{U'}=g^{*}(du,du)=g^{*}(dv,dv)$.  So since $\{u,v\}|_{R_A}=0$ we obtain $dF=0$ at each point of $R_A\cap U'$.  Letting $V'=\psi^{-1}(U')$, we have $F|_{V'}=f\circ (\psi|_{V'})^{-1}$, so we deduce that $df=0$ at each point of $(\psi|_{V'})^{-1}(R_A)$.  The fact that each $\left(U,V',\psi|_{V'},\frac{\partial f}{\partial z_j}\right)$ is a holomorphic reducing chart then follows immediately from the definition.
\end{proof}

\begin{cor}\label{redcor} If $(U,V,\psi,f)$ is a holomorphic reducing chart for $A$ around $x$ such that $f$ is not identically zero on $\psi^{-1}(U)$, then $x\notin R_A$.
\end{cor}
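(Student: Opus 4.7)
The plan is to argue by contradiction: suppose $x\in R_A$, and iteratively apply Proposition \ref{redind} to show that all partial derivatives of $f$ vanish at $\psi^{-1}(x)$, which by holomorphicity will force $f$ to vanish on $\psi^{-1}(U)$.

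First I would set $x_0=\psi^{-1}(x)\in V$ and observe that, since $x\in R_A$ by assumption, the defining property of a holomorphic reducing chart gives $f(x_0)=0$. Next, by induction on the multi-index $\alpha$, I would prove the following statement: for every $\alpha\in\N^n$ there is an open set $V^{(\alpha)}\subset V$ containing $\psi^{-1}(\bar U)$ such that $\bigl(U,V^{(\alpha)},\psi|_{V^{(\alpha)}},\frac{\partial^{|\alpha|}f}{\partial z^\alpha}\bigr)$ is a holomorphic reducing chart for $A$ around $x$. The base case $|\alpha|=0$ is the assumption, and the inductive step is an immediate application of Proposition \ref{redind} to the previous reducing chart (shrinking $V^{(\alpha)}$ slightly while keeping $\psi^{-1}(\bar U)$ inside it). Evaluating at $x_0\in\psi^{-1}(R_A)$ gives $\frac{\partial^{|\alpha|}f}{\partial z^\alpha}(x_0)=0$ for every $\alpha$.

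Finally, since $f$ is holomorphic on $V$, it agrees with its Taylor series at $x_0$ on some open neighborhood of $x_0$, and the vanishing of all partial derivatives at $x_0$ forces $f\equiv 0$ on that neighborhood. Because $U$ is connected and $\psi$ is a biholomorphism onto its image, $\psi^{-1}(U)$ is a connected open subset of $V$ containing $x_0$, so the identity principle for holomorphic functions implies that $f$ vanishes identically on $\psi^{-1}(U)$, contradicting the hypothesis. Therefore $x\notin R_A$.

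I do not expect any serious obstacle. The only point to be mildly careful about is that the domains $V^{(\alpha)}$ shrink at each inductive step; but since Proposition \ref{redind} always produces a domain containing $\psi^{-1}(\bar U)$ (in particular containing $x_0$), the induction runs unhindered, and the values of the partial derivatives at the single point $x_0$ are all we use.
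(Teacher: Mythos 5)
Your proof is correct and follows essentially the same approach as the paper: apply Proposition \ref{redind} inductively to obtain holomorphic reducing charts for all partial derivatives $\frac{\partial^{|\alpha|}f}{\partial z^\alpha}$, conclude that $f$ vanishes to infinite order at $\psi^{-1}(x)$ if $x\in R_A$, and invoke connectedness of $\psi^{-1}(U)$ together with the identity principle for holomorphic functions to derive a contradiction.
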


\begin{proof} By Proposition \ref{redind} and induction, for each multi-index $\alpha$ we obtain a holomorphic reducing chart for $A$ around $x$ of the form $\left(U,V_{\alpha},\psi|_{V_{\alpha}},\frac{\partial^{|\alpha|}f}{\partial z^{\alpha}}\right)$ where $V_{\alpha}$ is a neighborhood of $\bar{U}$.  If it were the case that $x\in R_A$ we would then obtain that $f$ vanishes to infinite order at $\psi^{-1}(x)$.  But since $\psi^{-1}(U)$ is connected and $f$ is holomorphic this implies that $f|_{\psi^{-1}(U)}$ is identically zero.
\end{proof}

\begin{theorem}\label{subvar}
Let $A$ be a complex analytic subvariety of positive codimension in a K\"ahler manifold $(M,\omega,J)$, or more generally any closed subset of a complex analytic subvariety of positive codimension.  Then $R_A=\varnothing$, and so $\delta$ vanishes identically on $\mathcal{L}(A)$.
\end{theorem}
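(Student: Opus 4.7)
The plan is to argue by contradiction, using the machinery already in place, with the main task being to exhibit a suitable holomorphic reducing chart to which Corollary \ref{redcor} applies. Suppose $A$ is a closed subset of a complex analytic subvariety $X\subset M$ of positive codimension, and suppose for contradiction that some $x\in R_A$ exists. Then $x\in R_A\subset A\subset X$.

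First I would choose a holomorphic coordinate chart $\psi\colon V\to M$ with $V\subset\C^n$ a connected open neighborhood of $\psi^{-1}(x)$, shrinking $V$ so that $\psi^{-1}(X\cap\psi(V))$ is cut out by finitely many holomorphic functions $f_1,\ldots,f_m$ on $V$ (possible by the very definition of complex analytic subvariety). Because $X$ has positive codimension at $x$, we may further shrink $V$ (keeping it connected) to ensure $\psi(V)\not\subset X$, so that at least one $f_j$ is not identically zero on $V$. Fix such an $f=f_j$, and choose a connected open neighborhood $U$ of $x$ with compact closure inside $\psi(V)$.

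Then $(U,V,\psi,f)$ is a holomorphic reducing chart for $A$ around $x$: the only nontrivial requirement is that $f$ vanish on $\psi^{-1}(R_A)$, and this holds because $R_A\subset A\subset X$ implies $\psi^{-1}(R_A)\subset\psi^{-1}(X)$, on which $f$ vanishes by construction. Since $V$ is connected and $f\not\equiv 0$ on $V$, the identity theorem for holomorphic functions on a connected open subset of $\C^n$ shows that $f$ cannot be identically zero on the nonempty open subset $\psi^{-1}(U)\subset V$. Corollary \ref{redcor} then yields $x\notin R_A$, a contradiction. Hence $R_A=\varnothing$. The final assertion that $\delta$ vanishes identically on $\mathcal{L}(A)$ now follows from Proposition \ref{flex}: taking $R_A=\varnothing$ gives $Ham(M,\omega)=Ham_{M\setminus R_A}\subset\bar\Sigma_A$, so every Hamiltonian diffeomorphism displaces $A$ through sets of $\delta$-distance zero, and the invariance of $\delta$ under the $Ham(M,\omega)$-action together with the triangle inequality forces $\delta\equiv 0$ on $\mathcal{L}(A)$.

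The main (and essentially only) obstacle is conceptual: one must translate the hypothesis ``positive-codimension analytic subvariety'' into the usable geometric input for the chart, namely the existence of a local defining holomorphic function which is not identically zero. Once this is in hand, Corollary \ref{redcor} does all the work, since the whole point of that corollary (built on Corollary \ref{raco}, and hence on Proposition \ref{irav}) was to rule out precisely this situation.
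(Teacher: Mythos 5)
Your proof is correct and takes essentially the same route as the paper: both arguments produce a local holomorphic defining function for the subvariety around an arbitrary point, observe (via $R_A\subset A$) that this gives a holomorphic reducing chart, and then invoke Corollary \ref{redcor} to conclude that the point is not in $R_A$. The only cosmetic difference is that you phrase it as a proof by contradiction for a single $x\in R_A$, while the paper directly covers $M$ by chart images and shows each is disjoint from $R_A$.
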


\begin{proof} By definition, $M$ is covered by the images of holomorphic charts $\psi_{\alpha}\co V_{\alpha}\to M$ each having the property that $\psi_{\alpha}^{-1}(A)$ is contained in the zero locus of some holomorphic function $f_{\alpha}\co V_{\alpha}\to \C$ that is not identically zero on any nonempty open subset.  Since $R_A\subset A$, then, if $U$ is any connected open subset whose closure is compact and contained in $\psi_{\alpha}(V_{\alpha})$ the tuple $(U,V_{\alpha},\psi_{\alpha},f_{\alpha})$ is a holomorphic reducing chart for $A$ around any point of $U$.  Such a $U$ can be found for any $x\in \psi_{\alpha}(V_{\alpha})$, so Corollary \ref{redcor} shows that $\psi_{\alpha}(V_{\alpha})\cap R_A=\varnothing$.
So since the various $\psi_{\alpha}(V_{\alpha})$ cover $M$, $R_A=\varnothing$.
\end{proof}

\begin{remark}
As Remark \ref{sublag} and the proof of Theorem \ref{subvar} illustrate, arguments that show that a point $x$ does not lie in the rigid locus of some subset $A$ often also show that $x\notin R_B$ whenever $B$ is a closed subset of $A$.  It seems natural to expect that one always has the inclusion $R_B\subset R_A$ whenever $B\subset A$, but I do not know a proof of this statement.
\end{remark}

\section*{Acknowledgements} I am grateful to Sobhan Seyfaddini for comments on an earlier draft of the paper.  Part of this work was motivated by talks at the workshop on $C^0$ Symplectic Topology and Dynamical Systems at IBS in Pohang in January 2014.  The work was partially supported by NSF grant DMS-1105700.

\end{document}